\theoremstyle{plain}
\newtheorem{theorem}{Theorem}
\newtheorem{proposition}{Proposition}
\newtheorem*{theorem A}{Theorem A}
\newtheorem*{theorem B}{Theorem B}
\newtheorem*{theorem C}{Theorem C}
\theoremstyle{definition}
\newtheorem{definition}{Definition}
\newtheorem{rem}{Remark}
\newtheorem*{proof A}{Proof of Theorem A}
\newtheorem*{proof B}{Proof of Theorem B}
\DeclareMathOperator*{\hd}{\mathrm{dim}_\mathrm{H}}
\begin{document}

\title{Measure of full dimension for some nonconformal repellers}
\maketitle \centerline{\small{NUNO LUZIA}}
\footnotetext{2000 \emph{Mathematics Subject Classification.} Primary: 37D35, 37C45.\\
 \emph{Keywords and phrases.} Hausdorff dimension, variational principle.\\
 Financed by Fundação para a Ciência e a Tecnologia (Portugal)}
\vspace{24pt}
\centerline{\scriptsize ABSTRACT} \text{}\\
\begin{footnotesize}
Given $(X,T)$ and $(Y,S)$ mixing subshifts of finite type such that $(Y,S)$ is a factor of $(X,T)$
with factor map $\pi\colon X\to Y$, and positive Hölder continuous functions
$\varphi\colon X\to \mathbb{R}$ and $\psi\colon Y\to \mathbb{R}$, we prove that the maximum of
\[
   \frac{h_{\mu\circ \pi^{-1}}(S)}{\int \psi\circ\pi\,d\mu}+
   \frac{h_\mu(T)-h_{\mu\circ \pi^{-1}}(S)}{\int \varphi\,d\mu}
\]
over all $T$-invariant Borel probability measures $\mu$ on $X$ is attained on the subset
of ergodic measures. Here $h_\mu(T)$ stands for the metric entropy of $T$ with respect to $\mu$.
As an application, we prove the existence of an ergodic invariant measure with full dimension for
a class of transformations treated in \cite{GP1}, and also for the transformations treated in
\cite{L1}, where the author considers nonlinear skew-product perturbations of
\emph{general Sierpinski carpets}. In order to do so we establish a variational principle for the
topological pressure of certain noncompact sets.
\end{footnotesize}

\section{Introduction}
The problem we are interested in is the computation of Hausdorff
dimension of invariant sets for dynamical systems, and moreover
the existence of an ergodic invariant measure on
the invariant set with full dimension. We are particularly
interested in the case of a map $f$ and $f$-invariant sets
$\Lambda$ such that $f|\Lambda$ is \emph{expanding}. Even in this
case, it is still a widely open problem only solved in great
generality when $f|\Lambda$ is expanding and conformal, due to the
thermodynamic formalism introduced by Sinai, Ruelle and Bowen (see
\cite{B2}, \cite{B3}, \cite{R2}): there is a formula in terms of the \emph{pressure
function} for the Hausdorff dimension, and there is an ergodic
invariant measure of full dimension which is a \emph{Gibbs state}.

In the nonconformal setting the computation of Hausdorff dimension is more delicate due to the
existence of several rates of expansion, and no such general formula exists. In \cite{L1}
we consider a class of skew-product expanding maps of the 2-torus of the form
$f(x,y)=(a(x,y), b(y))$ satisfying $\inf |Da|> \sup |Db|$, and
consider invariant sets $\Lambda$ which possess a simple Markov
structure, proving the \emph{variational principle for Hausdorff dimension}, i.e the existence of
ergodic invariant measures on $\Lambda$ with dimension arbitrarily close to the dimension of
$\Lambda$. Now because of the nonlinearity of the maps the problem of finding a maximizing
ergodic measure turns into another nontrivial problem. This is
because although we have a sequence of ergodic invariant measures
whose dimension converges to the dimension of the invariant set,
the map $\mu\mapsto\hd\mu$ is not in
general upper-semicontinuous. In \cite{L2} we exhibit an open class of
repellers, that possess a \emph{dominating spliting} that is
\emph{not too strong}, for which there is an ergodic measure of
full dimension. This is done by showing that the methods of Heuter
and Lalley \cite{HL}, who prove the validity of Falconer's formula (see
\cite{F2}) for an open class of self-affine transformations, extend to
nonlinear transformations. In this work we prove the existence of
an ergodic invariant measure with full dimension for the class of
transformations treated in \cite{L1}, thus giving new insight for
solving this kind of problem for nonlinear transformations. The
novelty is in reducing the problem to the the variational principle
for the topological pressure for certain noncompact sets, which we
establish.

Now we give an overview of related results, culminating in \cite{L1}
and this work. The simplest examples of invariant sets for
nonconformal transformations are the \emph{general Sierpinski
carpets}. These are invariant sets for transformations of the
2-torus of the form $f(x,y)=(lx,my)$, where $l>m>1$ are integers,
and are modeled by a full shift. Bedford \cite{Be} and McMullen \cite{Mu}
proved independently a formula for the Hausdorff dimension of
these sets which coincides with the dimension of a Bernoulli
measure. Later, Gatzouras and Lalley \cite{GL} generalized these
results for certain self-affine sets. Until then only sets modeled
by a full shift were treated when Kenyon and Peres \cite{KP} considered
expanding linear endomorphisms of the $d$-torus which are a direct
sum of conformal endomorphisms, and proved that any invariant set
supports an ergodic invariant measure of full dimension, thus
solving the problem for these kind of transformations. Following
\cite{GL}, Gatzouras and Peres \cite{GP1} considered nonlinear maps of the
form $f(x,y)=(f_1(x),f_2(y))$, where $f_1$ and $f_2$ are conformal
and expanding maps satisfying $\inf |Df_1|\ge\sup |Df_2|$, and
showed that, for a large class of invariant sets $\Lambda$, there
exist ergodic invariant measures with dimension arbitrarily close
to the dimension of $\Lambda$. The methods used in this work also provide an invariant
ergodic measure with full dimension for a class of tranformations treated in \cite{GP1}.

The notion of topological entropy for noncompact sets was
introduced by Bowen \cite{B1}. In \cite{PP} Pesin and Pitskel introduced the
notion of topological pressure for noncompact sets and proved the
variational principle for the topological pressure for certain noncompact sets. The noncompact
sets we are interested in are the ones treated in \cite{BS} and \cite{TV} where
they prove the variational principle for the topological entropy
for these sets.

Let us now state the main result. Given a continuous map $T\colon X\to X$ of a compact metric
space $X$, we use the following notation: $\mathcal{M}(T)$ is the set of all $T$-invariant Borel
probability measures on $X$; $\mathcal{M}_e(T)\subset \mathcal{M}(T)$ is the subset of ergodic
measures; $h_\mu(T)$ is the metric entropy of $T$ with respect to $\mu\in\mathcal{M}(T)$.

\begin{theorem A}
Let $(X,T)$ and $(Y,S)$ be mixing subshifts of finite type, and $\pi\colon X\to Y$ be a
continuous and surjective mapping such that $\pi\circ T=S\circ \pi$ ($S$ is a factor of $T$).
Let $\varphi\colon X\to \mathbb{R}$ and $\psi\colon Y\to \mathbb{R}$ be
positive Hölder continuous functions. Then the maximum of
\begin{equation}\label{pvprinc}
   \frac{h_{\mu\circ \pi^{-1}}(S)}{\int \psi\circ\pi\,d\mu}+
   \frac{h_\mu(T)-h_{\mu\circ \pi^{-1}}(S)}{\int \varphi\,d\mu}
\end{equation}
over all $\mu\in\mathcal{M}(T)$ is attained on the set $\mathcal{M}_e(T)$.
\end{theorem A}

\begin{rem}
Theorem A answers positively to Problem 2 raised in \cite{GP2} for mixing subshifts of finite type and
Hölder continuous potentials. So, it applies to obtain an invariant ergodic measure
of full dimension for a class of transformations treated in \cite{GP1}.
\end{rem}

Now we define \emph{general Sierpinski carpet}. Let
$\mathbb{T}^2=\mathbb{R}^2/\mathbb{Z}^2$ be the 2-torus and
$f_0\colon \mathbb{T}^2\to \mathbb{T}^2$ be given by
$f_0(x,y)=(lx, my)$ where $l>m>1$ are integers. The grid of lines
$[0,1]\times\{i/m\},\, i=0,...,m-1$, and $\{j/l\}\times[0,1],\,
j=0,...,l-1$, form a set of rectangles each of which is mapped by
$f_0$ onto the entire torus (these rectangles are the domains of
invertibility of $f_0$). Now choose some of these rectangles and
consider the fractal set $\Lambda_0$ consisting of those points
that always remain in these chosen rectangles when iterating
$f_0$. Geometrically, $\Lambda_0$ is the limit (in the Hausdorff
metric) of \emph{$n$-approximations}: the 1-approximation consists
of the chosen rectangles, the 2-approximation consists in dividing
each rectangle of the 1-aproximation into $l\times m$
subrectangles and selecting those with the same pattern as in the
begining, and so on. We say that $(f_0,\Lambda_0)$ is a \emph{general Sierpinski carpet}.

Let $(f_0,\Lambda_0)$ be a general Sierpinski carpet. There exists
$\varepsilon>0$ such that if $f$ is  $\varepsilon-\mathrm{C}^1$
close to $f_0$ then there is a unique homeomorphism
$h\colon\mathbb{T}^2\to \mathbb{T}^2$ close to the identity which
conjugates $f$ and $f_0$, i.e $f\circ h=h\circ f_0$ (see \cite{S}).

\begin{definition}
The $f$-invariant set $\Lambda=h(\Lambda_0)$ is called the
$f$-\emph{continuation} of $\Lambda_0$.
\end{definition}

\begin{definition}
Let $\mathcal{S}$ be the class of $\mathrm{C}^2$ maps $f\colon
\mathbb{T}^2\to \mathbb{T}^2$ of the form
\[
   f(x,y)=(a(x,y),b(y)).
\]
\end{definition}

\emph{Notation}: $\hd\Lambda$ and $\hd\mu$ stand for the Hausdorff
dimension of a set $\Lambda$ and a measure $\mu$, respectively.

Then the following is proved in \cite{L1}.

\begin{theorem}\label{teol}
Let $(f_0,\Lambda_0)$ be a general Sierpinski carpet. There exists
$\varepsilon>0$ such that if $f\in\mathcal{S}$ is
$\varepsilon-\mathrm{C}^2$ close to $f_0$, and $\Lambda$ is the
$f$-continuation of $\Lambda_0$, then
\[
 \hd\Lambda=\sup\{\hd\mu : \mu(\Lambda)=1, \text{ $\mu$ is $f$-invariant and ergodic}\}.
\]
\end{theorem}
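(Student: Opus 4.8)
The plan is to pass to symbolic dynamics, to identify $\hd\mu$ for ergodic $\mu$ with the value of the functional in \eqref{pvprinc} (which I denote $F$) via a Ledrappier--Young type formula, to obtain the lower bound $\hd\Lambda\ge\sup_\mu\hd\mu$ essentially for free, and then to prove the matching upper bound for $\hd\Lambda$ by an ``approximate square'' covering argument in the style of McMullen \cite{Mu} and Gatzouras--Lalley \cite{GL}, together with a construction of ergodic measures of nearly maximal dimension. Concretely, I would first code $(\Lambda,f|\Lambda)$ by a mixing subshift of finite type $(X,T)$; for a general Sierpinski carpet this is a full shift on the alphabet of selected rectangles, and the continuation $\Lambda=h(\Lambda_0)$ inherits the coding through the conjugacy $h$. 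Likewise the $y$-factor $(\pi_Y\Lambda,\,b|_{\pi_Y\Lambda})$, where $\pi_Y\colon\mathbb{T}^2\to\mathbb{R}/\mathbb{Z}$ is the second-coordinate projection, is coded by a mixing subshift $(Y,S)$ with a symbolic factor map $\pi\colon X\to Y$; here one uses that $h$ maps $y$-fibers to $y$-fibers, because the $y$-dynamics of $f\in\mathcal S$ depends on $y$ alone, so that structural stability of $y\mapsto my$ and uniqueness of $h$ pin down its second component. Finally set $\varphi=\log|\partial_x a|$ and $\psi=\log|b'|$, transported to $X$ and $Y$; these are positive (by expansion) and H\"older continuous (since $f$ is $C^2$), and for $\varepsilon$ small the domination $\inf|\partial_x a|>\sup|b'|$ inherited from $l>m$ gives $\min\varphi>\max\psi$, so that \eqref{pvprinc} is defined for this data.

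Next, for ergodic $\mu\in\mathcal M(f|\Lambda)$ --- equivalently, for its symbolic lift in $\mathcal M(T)$ --- I would invoke the Ledrappier--Young formula for this skew product,
\[
   \hd\mu=\frac{h_{\mu\circ\pi^{-1}}(S)}{\int\psi\circ\pi\,d\mu}+\frac{h_\mu(T)-h_{\mu\circ\pi^{-1}}(S)}{\int\varphi\,d\mu}=F(\mu),
\]
which is exactly \eqref{pvprinc}; the domination hypothesis is what orders the Lyapunov exponents, $\int\varphi\,d\mu>\int\psi\circ\pi\,d\mu$, so that the $x$-direction is the fast one and the formula takes this two-term form (this is essentially carried out in \cite{L1}). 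Granting it, the lower bound $\hd\Lambda\ge\sup_\mu\hd\mu$, the supremum being over ergodic $f$-invariant measures on $\Lambda$, is immediate: such a $\mu$ is exact-dimensional with pointwise dimension equal to the right-hand side $\mu$-almost everywhere (Birkhoff plus Shannon--McMillan--Breiman), whence $\hd\mu\le\hd(\operatorname{supp}\mu)\le\hd\Lambda$.

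The substance is the reverse inequality. Write $\varphi_n=\sum_{j=0}^{n-1}\varphi\circ T^j$ and $\psi_m=\sum_{j=0}^{m-1}\psi\circ S^j$. For $x\in\Lambda$ (identified with its code) and small $r>0$, choose depths $n=n(x,r)\le m=m(x,r)$ by $\varphi_n(x)\approx-\log r$ and $\psi_m(\pi x)\approx-\log r$ --- the inequality $n\le m$ being exactly where $\psi<\varphi$ is used --- and let $Q(x,r)\subset\Lambda$ be the \emph{approximate square} of points whose $x$-code agrees with that of $x$ to depth $n$ and whose $y$-code agrees to depth $m$; bounded distortion for the H\"older potentials gives $\operatorname{diam}Q(x,r)\asymp r$. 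These cover $\Lambda$, so it suffices to bound $N(r)$, the number of distinct approximate squares at scale $r$, and to show $\limsup_{r\to0}(\log N(r))/(-\log r)\le s$, where $s:=\sup_{\nu\in\mathcal M(T)}F(\nu)$. Stratifying by the underlying depth-$m$ $y$-cylinder: the number of $y$-cylinders of diameter $\asymp r$ is $\asymp r^{-t}$ with $t=\hd(\pi_Y\Lambda)$ the Bowen root of $P(S,-t\psi)=0$ (classical conformal thermodynamic formalism on the $y$-factor), and above each one the number of admissible depth-$n$ $x$-cylinders at scale $r$ in the $x$-direction is controlled by a pressure; pushing the bookkeeping through and invoking the variational principle for topological pressure identifies the resulting critical exponent with $s$, so that $\hd\Lambda\le s$. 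To close the gap I would then construct, for every $\delta>0$, an ergodic $\mu$ with $F(\mu)>s-\delta$ --- for instance an equilibrium state for an appropriately chosen potential, or a measure with prescribed symbolic statistics as in \cite{GP1} --- whence $s-\delta<\hd\mu\le\hd\Lambda\le s$; letting $\delta\to0$ yields $\hd\Lambda=s=\sup_\mu\hd\mu$, which is the assertion of Theorem~\ref{teol}.

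I expect the main obstacle to be the upper-bound covering estimate. Unlike the linear McMullen case the depths $n(x,r)$ and $m(x,r)$ genuinely depend on $x$, so one cannot count cylinders of fixed length; one must combine uniform bounded-distortion bounds for $\varphi$ and $\psi$ with a submultiplicativity argument to extract the two pressures, keep both symbolic scales comparable to $r$ throughout --- exactly where $\min\varphi>\max\psi$ is indispensable --- and then verify that the critical covering exponent equals $\sup_{\mathcal M(T)}F$ and not some larger quantity. A secondary difficulty is the construction of near-optimal ergodic measures, since $F$ is a sum of two linear-fractional functionals of $\mu$ and need not be maximized at an equilibrium state for a single potential.
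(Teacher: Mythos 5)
You should first be aware that the paper you are being compared against does not actually prove Theorem~\ref{teol}: it quotes it from \cite{L1}, and the present paper's contribution (Theorems A--C) is the upgrade from ``supremum'' to ``attained maximum''. In \cite{L1} the proof does share your general skeleton (symbolic coding of the skew product, a Ledrappier--Young type formula for ergodic measures, approximate squares, thermodynamic formalism), but the decisive steps are organized through the \emph{relativized} formalism: $\hd\Lambda$ is shown to equal $\sup_\nu\bigl\{h_\nu(b)/\int\log b'\,d\nu+t(\nu)\bigr\}$ over ergodic measures $\nu$ of the base, where $t(\nu)$ is the root of the integrated relative pressure equation, and the near-optimal ergodic measures are the fiberwise equilibrium (Gibbs) states $\mu_\nu$ of Denker--Gordin relative to ergodic $\nu$ (this is Theorem~\ref{teo2} here). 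Your lower bound and the coding/fiber-preservation remarks are fine.

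The genuine gap is in your upper bound. You propose to bound $N(r)$, the number of \emph{all} approximate squares at scale $r$, and to show $\limsup_{r\to 0}\log N(r)/(-\log r)\le s$ with $s=\sup_{\mathcal M(T)}F$. That quantity is (up to bounded distortion) the upper box dimension of $\Lambda$, and for general Sierpinski carpets the box dimension strictly exceeds the Hausdorff dimension whenever the rows are not uniformly populated: already in the linear Bedford--McMullen case one has box dimension $\log_m s+\log_l(R/s)$ versus Hausdorff dimension $\log_m\bigl(\sum_j t_j^{\log m/\log l}\bigr)$, where $t_j$ is the number of chosen rectangles in row $j$, $R=\sum_j t_j$ and $s$ is the number of nonempty rows. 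So the inequality you plan to verify is false in general, and no amount of bounded-distortion bookkeeping will rescue a uniform count at a single scale; the correct upper bound (in \cite{Mu}, \cite{GL} and \cite{L1}) requires covers by approximate squares of \emph{varying} scales, or equivalently a Billingsley-type pointwise density argument against carefully constructed (sequences of) measures, and this is exactly the hard part of those proofs. A second, related gap is your final step: you need ergodic measures with $F(\mu)>s-\delta$ where $s$ is a supremum over \emph{all} invariant measures, but $F$ is a sum of two ratios and is not affine, so ergodic decomposition does not give this, and an equilibrium state of a single potential does not maximize it; in \cite{L1} (and \cite{GP1}) this is circumvented by working over ergodic base measures $\nu$ and using that $\mu_\nu$ is ergodic whenever $\nu$ is, and the full attainment statement is precisely what Theorem A of the present paper supplies.
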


Here we improve this result.

\begin{theorem B}
With the same hypothesis of Theorem \ref{teol}, there exists an
ergodic invariant measure $\mu$ on $\Lambda$ such that
\[
 \hd\Lambda=\hd\mu .
\]
Morever, $\mu$ is a Gibbs state for a relativized variational
principle.
\end{theorem B}

To prove these results we use the variational principle for the
topological pressure for certain noncompact sets as we shall
describe now. Given a continuous map $T\colon X\to X$ of a
compact metric space,denote by $P(\psi, K)$ the topological pressure associated to a continuous
function $\psi\colon X\to\mathbb{R}$ and a $T$-invariant set $K$
(not necessarly compact), as defined in \cite{PP}. Let
\[
    I_\psi=\left(\inf_{\mu\in\mathcal{M}(T)} \int \psi\,d\mu ,
    \sup_{\mu\in\mathcal{M}(T)} \int \psi\,d\mu\right) .
\]

\begin{theorem C}
Let $(X, T)$ be a mixing subshift of finite type, and
$\varphi,\psi\colon X\to\mathbb{R}$ Hölder continuous functions.
For $\alpha\in\mathbb{R}$ let
\[
     K_\alpha=\left\{x\in X : \lim_{n\to\infty} \frac{1}{n}
     \sum_{i=0}^{n-1} \psi(T^i(x))=\alpha\right\}.
\]
If $0\notin \partial I_\psi$ and $\alpha\in I_\psi$ then
\[
   P(\varphi, K_\alpha)=\sup\left\{ h_\mu(T)+\int \varphi\,d\mu:
   \mu\in\mathcal{M}(T) \text{ and } \int \psi\,d\mu=\alpha \right\}.
\]
Morever, the supremum is attained at a unique measure $\mu_\beta$ which is the Gibbs state
with respect to the potential $\varphi+\beta \psi$, for a unique $\beta\in\mathbb{R}$.
\end{theorem C}

For definitions and basic results about dimension we refer the
reader to the books \cite{F1} and \cite{P}.

\section{Proof of Theorem C}

We start by proving the `moreover' part. We have, for all $\beta\in\mathbb{R}$,
\begin{align*}
&\sup\left\{ h_\mu(T)+\int \varphi\,d\mu: \mu\in\mathcal{M}(T) \text{
and }
  \int \psi\,d\mu=\alpha \right\}\\
&=\sup\left\{ h_\mu(T)+\int \varphi\,d\mu + \beta \int \psi\,d\mu:
\mu\in\mathcal{M}(T) \text{ and }
  \int \psi\,d\mu=\alpha \right\}-\beta\alpha\\
&\le\sup\left\{ h_\mu(T)+\int \varphi\,d\mu + \beta \int
\psi\,d\mu: \mu\in\mathcal{M}(T) \right\}
  -\beta\alpha.
\end{align*}
Now it is well known (see \cite{B2}) that the last supremum is uniquely attained
at the \emph{Gibbs state} $\mu_\beta$ associated to the potential
$\varphi + \beta \psi$ (for the classical variational principle).
So we must find a unique $\beta$ such that $\int \psi\,d\mu_\beta=\alpha$.

We use the abbreviation $P(\cdot)=P(\cdot,X)$. It is proved in
\cite{R1} that $P(\cdot)$ is a real analytic function on the space of
Hölder continuous functions and that
\begin{align*}
 &\frac{d}{d\varepsilon}\Bigr|_{\varepsilon=0}P(h_1+\varepsilon h_2)=
  \int h_2\,d\mu_{h_1},\\
 &{\frac{\partial^2P(h+\varepsilon_1 h_1+\varepsilon_2 h_2)}
  {\partial\varepsilon_1\partial\varepsilon_2}}\Big|_{\varepsilon_1=\varepsilon_2=0}
  =Q_h(h_1,h_2),
\end{align*}
where $Q_h$ is the bilinear form defined by
\begin{equation}\label{forma}
 Q_h(h_1,h_2)=\sum_{n=0}^\infty \left( \int h_1(h_2\circ f^n)\,d\mu_h - \int h_1\,d\mu_h
 \int h_2\,d\mu_h \right),
\end{equation}
and $\mu_h$ is the Gibbs measure for the potential $h$. Moreover,
$Q_h(h_1,h_1)\ge0$ and $Q_h(h_1,h_1)=0$ if and only if $h_1$ is
\emph{cohomologous} to a constant function. From this we get that
\begin{equation}\label{monot}
   \frac{d}{d\beta}\int \psi\,d\mu_\beta=\frac{d^2}{d\beta^2} P(\varphi+\beta \psi)=
   Q_{\varphi+\beta \psi}(\psi,\psi)>0
\end{equation}
(the hypothesis $\alpha\in I_\psi$ prevents $\psi$ being
cohomologous to a constant). So we must see that
\begin{align}
  \lim_{\beta\to\infty} \int \psi\,d\mu_\beta &=\sup_{\mu\in\mathcal{M}(T)}
   \int \psi\,d\mu, \label{ext1}\\
  \lim_{\beta\to -\infty} \int \psi\,d\mu_\beta &=\inf_{\mu\in\mathcal{M}(T)}
   \int \psi\,d\mu. \label{ext2}
\end{align}
\emph{Proof of (\ref{ext1}):} We use the notation
\[
   p(\beta)\sim q(\beta)\quad (\beta\to\infty) \quad\text{means}\quad
   \lim_{\beta\to\infty}\frac{p(\beta)}{q(\beta)}=1.
\]
We have that
\begin{equation}\label{cauchy1}
  \int \psi\,d\mu_\beta= \frac{d}{d\beta} P(\varphi+\beta\psi)=\frac{d}{d\beta}
  \sup_{\mu\in\mathcal{M}(T)} \left\{ h_\mu(T)+\int\varphi\,d\mu + \beta \int\psi\,d\mu\right\}.
\end{equation}
Since $\mu\mapsto h_\mu(T)+\int\varphi\,d\mu$ is bounded, it is
easy to see that
\begin{equation}\label{cauchy2}
 \sup_{\mu\in\mathcal{M}(T)} \left\{ h_\mu(T)+\int\varphi\,d\mu + \beta \int\psi\,d\mu\right\}
 \sim \beta \sup_{\mu\in\mathcal{M}(T)} \int \psi\,d\mu,
\end{equation}
so, using L'Hospital's rule applied to (\ref{cauchy1}) and
(\ref{cauchy2}), we obtain (\ref{ext1}). The proof of (\ref{ext2})
is similar.

Now we prove equality with $P(\varphi, K_\alpha)$. As before, let
$\mu_\beta$ be the Gibbs state for the potential
$\varphi+\beta\psi$ where $\beta$ is such that $\int
\psi\,d\mu_\beta=\alpha$. Since $\mu_\beta$ is ergodic, by
Birkhoff's ergodic theorem we get that $\mu_\beta(K_\alpha)=1$ and
this implies that (see \cite{PP})
\[
  P(\varphi,K_\alpha)\ge h_{\mu_\beta}(f)+\int \varphi\,d\mu_\beta.
\]
So we are left to prove
\[
  P(\varphi,K_\alpha)\le P(\varphi+\beta\psi)-\beta\alpha.
\]
Given $x\in X$ we use the notation $x=(x_1,x_2,...,x_n,...)$ and
denote by $[x_1,...,x_n]$ the cylinder set $\{y\in X :
y_i=x_i,\,i=1,...,n\}$. Fix $\varepsilon>0$ and, for each $N\in\mathbb{N}$, define $n_N(x)$,
for $x\in K_\alpha$, to be the least integer $\ge N$ for which
\begin{equation}\label{pres1}
    \left|\frac{1}{n_N(x)}\sum_{i=0}^{n_N(x)-1}\psi(f^i(x))-\alpha\right|<\varepsilon
\end{equation}
is satisfied. Then let $\tilde{\mathcal{C}}^{(\varepsilon)}_N$ consist of all cylinders
$[x_1,...,x_{n_N(x)}]$ for $x\in K_\alpha$. Notice that $\tilde{\mathcal{C}}^{(\varepsilon)}_N$
is a countable cover of $K_\alpha$, because $n_N(x)<\infty$ for each $x\in K_\alpha$ and because
there are only a countable number of cylinders (of finite length) to begin with. Now take
$\mathcal{C}^{(\varepsilon)}_N$ to be a subcover of $\tilde{\mathcal{C}}^{(\varepsilon)}_N$
consisting of pairwise disjoint cylinders, which exists simply because cylinders are nested.
For each $C\in \mathcal{C}^{(\varepsilon)}_N$ one has that $C=[x_1^C,...,x_{n_N(x^C)}^C]$ for
some $x^C\in K_\alpha$; fixing such an $x^C$ for each $C$, we define, for $\lambda\in\mathbb{R}$,
\[
  m(\varphi,\lambda, \mathcal{C}^{(\varepsilon)}_N)=\sum_{C\in\mathcal{C}^{(\varepsilon)}_N}
  \exp(-\lambda n_N(x^C)+\overline{S}_{n_N(x^C)}\varphi(x^C)),
\]
where
\[
    \overline{S}_{n}\varphi(x)=\sup \left\{ \sum_{i=0}^{n-1}\varphi(f^i(y)) :
    y_i=x_i,\,i=1,...,n \right\}.
\]
Then, it follows from the definition of topological pressure (see
\cite{PP}) that
\[
      \sup_{N\in\mathbb{N}} m(\varphi,\lambda, \mathcal{C}^{(\varepsilon)}_N)<\infty
      \Longrightarrow P(\varphi, K_\alpha)\le \lambda.
\]
Now, since $\mu_\beta$ is the Gibbs state for the potential
$\varphi+\beta\psi$, there are positive constants $c_1$ and $c_2$ (independent of $\varepsilon$
and $N$) such that, for every $C\in\mathcal{C}^{(\varepsilon)}_N$,
\[
   c_1\le\frac{\mu_\beta([x_1^C...x_{n_N(x^C)}^C])}
   {\exp(-P(\varphi+\beta\psi)n_N(x^C)+S_{n_N(x^C)}\varphi(x^C)+\beta S_{n_N(x^C)}\psi(x^C))}
   \le c_2,
\]
where $S_{n}\varphi(x)=\sum_{i=0}^{n-1}\varphi(f^i(x))$ (see \cite{B2}). Summing over
$C\in\mathcal{C}^{(\varepsilon)}_N$ we obtain
\begin{equation}\label{pres2}
 \sum_{C\in\mathcal{C}^{(\varepsilon)}_N} \exp(-P(\varphi+\beta\psi)n_N(x^C)+S_{n_N(x^C)}
 \varphi(x^C)+\beta S_{n_N(x^C)}\psi(x^C))\le c_1^{-1}.
\end{equation}
Since $\varphi$ is Hölder continuous, by a classical argument of
bounded distortion, there exists a constant $c$ such that, for
every $n\in\mathbb{N}$ and $x\in X$,
\begin{equation}\label{pres3}
   |S_n\varphi(x)-\overline{S}_{n}\varphi(x)|\le c.
\end{equation}
So, using (\ref{pres1}) and (\ref{pres3}) in (\ref{pres2}) we obtain
\begin{equation*}
 m(\varphi, P(\varphi+\beta\psi)-\beta\alpha+|\beta|\varepsilon, \mathcal{C}^{(\varepsilon)}_N)
 \le e^c c_1^{-1} \quad \forall\, N\in\mathbb{N},
\end{equation*}
which shows that $P(\varphi, K_\alpha)\le P(\varphi+\beta\psi)-\beta\alpha+|\beta|\varepsilon$.
Since $\varepsilon$ is arbitrarily small we get
\[
  P(\varphi, K_\alpha)\le P(\varphi+\beta\psi)-\beta\alpha
\]
which finishes the proof of Theorem C.

\section{Main result}

Let $T\colon X\to X$ and $S\colon Y\to Y$ be continous mappings of compact metric spaces, and
$\pi\colon X\to Y$ be a continuous and surjective mapping such that $\pi\circ T=S\circ \pi$.
Then the \emph{relativized variational principle} (see \cite{LW}) says that,
given $\nu\in\mathcal{M}(S)$ and a continuous function $\varphi\colon X\to \mathbb{R}$,
\begin{equation}\label{rpv}
  \sup_{\substack{\mu\in\mathcal{M}(X)\\ \mu\circ\pi^{-1}=\nu}}
  \left\{h_\mu(T)-h_\nu(S)+\int_X \varphi\,d\mu\right\}=\int_{Y}
  P(T,\varphi,\pi^{-1}(y))\,d\nu(y),
\end{equation}
where $P(T,\varphi,Z)$ denotes the relative pressure
of $T$ with respect to $\varphi$ and a compact set $Z\subset X$. We say that $\mu$ is an
\emph{equilibrium state} for (\ref{rpv}) if the supremum is attained at $\mu$.

Now if, moreover, $(X,T)$ and $(Y,S)$ are mixing subshifts of finite type then,
according to \cite{DG}, \cite{DGH}, there is a unique equilibrium state $\mu$ for (\ref{rpv})
relative to any $\nu\in\mathcal{M}(S)$ and any Hölder continuous $\varphi$. Moreover,
$\mu$ is ergodic if $\nu$ is ergodic.\\

\emph{Proof of Theorem A}.
Since $\varphi$ is positive then, given $\nu\in\mathcal{M}(S)$, there is a unique
real $t(\nu)\in [0,1]$ such that
\begin{equation}\label{pres0}
    \int_{Y} P(T,-t(\nu)\varphi,\pi^{-1}(y))\,d\nu(y)=0
\end{equation}
(note that $t\mapsto P(T,-t\varphi,\pi^{-1}(y))$ is strictly
decreasing). Denote by $\mu_\nu$ the unique equilibrium state for
(\ref{rpv}) relative to $\nu$ and $-t(\nu)\varphi$. Then it
follows from the relativized variational principle that, for $\mu\in\mathcal{M}(T)$ such that
$\mu\circ\pi^{-1}=\nu$,
\begin{equation}\label{restrain}
   \frac{h_\mu(T)-h_\nu(S)}{\int \varphi\,d\mu}\le t(\nu)
\end{equation}
with equality if and only if $\mu=\mu_\nu$. Put
\[
  D(\mu)= \frac{h_{\mu\circ \pi^{-1}}(S)}{\int \psi\circ\pi\,d\mu}+
   \frac{h_\mu(T)-h_{\mu\circ \pi^{-1}}(S)}{\int \varphi\,d\mu}
\]
and
\begin{equation}\label{pvorig1}
   D=\sup_{\mu\in\mathcal{M}(T)} D(\mu).
\end{equation}
Then it follows by (\ref{restrain}) that
\begin{equation}\label{pvorig2}
   D=\sup_{\nu\in\mathcal{M}(S)} \left\{
   \frac{h_{\nu}(S)}{\int \psi \,d\nu}+ t(\nu) \right\},
\end{equation}
and if this supremum is attained at $\nu_0\in\mathcal{M}_e(S)$ then the supremum in
(\ref{pvorig1}) is attained at $\mu_{\nu_0}\in\mathcal{M}_e(T)$ as we wish.

It follows from (\ref{pvorig2}) that
\begin{equation}\label{novopv1}
   \sup_{\nu\in\mathcal{M}(S)} \left\{ h_\nu(S)+(t(\nu)-D)\int \psi\,d\nu \right\}=0,
\end{equation}
and if this supremum is attained at $\nu_0\in\mathcal{M}_e(S)$ then so is the supremum in
(\ref{pvorig2}) and thus the supremum in (\ref{pvorig1}) is attained at
$\mu_{\nu_0}\in\mathcal{M}_e(T)$ as we wish.
Let
\[
  \underline{t}=\inf_{\nu\in\mathcal{M}(S)} t(\nu) \quad\text{and}\quad
  \overline{t}=\sup_{\nu\in\mathcal{M}(S)} t(\nu).
\]
If $\underline{t}=\overline{t}$ then
\[
     D=\sup_{\nu\in\mathcal{M}(S)} \frac{h_\nu(S)}{\int \psi\,d\nu}+\overline{t}
     =\frac{h_{\nu_0}(S)}{\int \psi\,d\nu_0}+t(\nu_0),
\]
where $\nu_0$ is the \emph{Gibbs state} for the potential $-D\psi$ which is well known to be
ergodic (see \cite{B3}). Otherwise, the supremum in (\ref{novopv1}) can be
rewritten as
\begin{equation}\label{novopv2}
   \sup_{\underline{t}\le t\le\overline{t}}\; \sup_{\substack{\nu\in\mathcal{M}(S) \\ t(\nu)=t}}
   \left\{ h_\nu(S)+\int (t-D)\psi\,d\nu \right\}.
\end{equation}
According to \cite{DG}, \cite{DGH}, there is a Hölder continuous function
$A_{-t\varphi}\colon Y\to\mathbb{R}$ such that
\begin{equation}\label{gauge}
 \int \log A_{-t\varphi}\,d\nu=\int P(T,-t\varphi,\pi^{-1}(y))\,d\nu(y),
\end{equation}
so by (\ref{pres0}),
\[
    t(\nu)=t \Leftrightarrow \int \log A_{-t\varphi}\,d\nu=0.
\]
So, the supremum in (\ref{novopv2}) can be rewritten as
\begin{equation}\label{novopv3}
   \sup_{\underline{t}\le t\le \overline{t}}\sup \left\{ h_\nu(S)+\int (t-D)\psi\,d\nu :
   \nu\in\mathcal{M}(S) \text{ and } \int \log A_{-t\varphi}\,d\nu=0 \right\}.
\end{equation}
Now we see that we satisfy the hypotheses of Theorem B.
The case for which this supremum is attained at $\underline{t}$ or $\overline{t}$ will be
treated in the end of the proof, so we assume this does not occur.
If $t\in (\underline{t},\overline{t})$ then there exists
$\overline{\nu}\in\mathcal{M}(S)$ such that $t(\overline{\nu})<t$
which implies that
\begin{align*}
  &\inf_{\nu\in\mathcal{M}(S)} \int \log A_{-t\varphi}\,d\nu\le
  \int \log A_{-t\varphi}\,d\overline{\nu}=\\
  &=\int P(T,-t\varphi,\pi^{-1}(y))\,d\overline{\nu}(y)<
  \int P(T,-t(\overline{\nu})\varphi,\pi^{-1}(y))\,d\overline{\nu}(y)=0.
\end{align*}
In the same way we get that
\[
   \sup_{\nu\in\mathcal{M}(S)} \int \log A_{-t\varphi}\,d\nu>0.
\]
So, applying Theorem B we get that the intermediate supremum in
(\ref{novopv3}) is attained at the Gibbs measure (hence ergodic)
$\nu_{\beta(t)}$ for the potential $(t-D)\psi+\beta(t)\log A_{-t\varphi}$, and the value of this
supremum is, with $P(\cdot)=P(\cdot, Y)$,
\begin{equation}\label{h}
   h(t)=P((t-D)\psi+\beta(t)\log A_{-t\varphi}),
\end{equation}
where $\beta(t)$ is the unique real satisfying
\[
    \int \log A_{-t\varphi}\,d\nu_{\beta(t)}=0\quad\text{i.e.}\quad t(\nu_{\beta(t)})=t.
\]
We shall see that the function $(\underline{t},\overline{t})\ni
t\mapsto h(t)$ is continuous.

We observe the following fact from \cite{DG} (see Corollary 4.14,
Remark 4.16 and Proposition 5.5): If
$\phi\colon X\to\mathbb{R}$ is Hölder continuous then there
exist continuous functions
$A^n_\phi\colon Y\to\mathbb{R}$, $n\in\mathbb{N}$, such
that $A^n_\phi\to A_\phi$ (in the uniform topology) and
$\phi\mapsto A^n_\phi$ is continuous. Moreover, if $\phi$
satisfies
\begin{equation}\label{holder}
   |\phi(x_1)-\phi(x_2)|\le \alpha d(x_1,x_2)^\gamma,\quad x_1,x_2\in X,
\end{equation}
then the speed of convergence of $A^n_\phi\to A_\phi$ is
exponential depending only on $\alpha$ and $\gamma$. This implies
that $[0,1]\ni t\mapsto A_{-t\varphi}$ is continuous.

Now we prove continuity of $(\underline{t},\overline{t})\ni
t\mapsto \beta(t)$. Let $\mathcal{H}^{\alpha,\gamma}$ be the set
of functions $\phi\colon Z\to\mathbb{R}$ satisfying
(\ref{holder}), where $Z$ is $X$ or $Y$ depending
on the context.
\begin{proposition}
Given $\alpha>0$ and $\gamma\in (0,1]$ there exists $C>0$ such
that
\[
  \left|\int \psi\, d\nu_{\phi_1}-\int \psi\, d\nu_{\phi_2}\right|\le C \|\phi_1-\phi_2\|,\quad
  \text{for every } \psi,\phi_1,\phi_2\in\mathcal{H}^{\alpha,\gamma},
\]
where $\nu_{\phi}$ is the Gibbs state for the potential $\phi$.
\end{proposition}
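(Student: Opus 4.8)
The plan is to pass from $\nu_{\phi_1}$ to $\nu_{\phi_2}$ along the straight segment $\phi_s=(1-s)\phi_1+s\phi_2$, $s\in[0,1]$, in the space $C^\gamma(Z)$ of $\gamma$-H\"older functions, and to integrate the derivative of $s\mapsto\int\psi\,d\nu_{\phi_s}$; by the analyticity results of \cite{R1} recalled above this derivative is the bilinear form $Q$ of (\ref{forma}), so the Proposition reduces to a \emph{uniform} decay-of-correlations estimate for the Gibbs measures $\mu_{\phi_s}$. First a normalization: since $\nu_\phi$ is unchanged when a constant is added to $\phi$, and $\int\psi\,d\nu_{\phi_1}-\int\psi\,d\nu_{\phi_2}$ is unchanged when a constant is added to $\psi$, I may replace $\psi$ by $\psi-\psi(x_0)$ for a fixed point $x_0\in Z$, so that $\|\psi\|_\infty\le\alpha(\mathrm{diam}\,Z)^\gamma=:M$ and hence $\|\psi\|_{C^\gamma}\le M+\alpha$. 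I will not normalize $\phi_1,\phi_2$; I only note that, by convexity of $\mathcal H^{\alpha,\gamma}$, each $\phi_s$ again lies in $\mathcal H^{\alpha,\gamma}$, so in particular $\mathrm{osc}(\phi_s)\le M$ and $\mathrm{Hol}_\gamma(\phi_s)\le\alpha$, while $\mu_{\phi_s}$ and the associated normalized Ruelle operator depend on $\phi_s$ only modulo constants.

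Next, recall from \cite{R1} that $P=P(\cdot,Z)$ is real analytic on $C^\gamma(Z)$, with first derivative $h\mapsto\int(\cdot)\,d\nu_h$ and mixed second derivative equal to $Q_h$ as in (\ref{forma}); in particular $Q_h$ is symmetric. Writing $v=\phi_2-\phi_1$ and applying the chain rule to the affine curve $s\mapsto\phi_1+sv$, the map $s\mapsto\int\psi\,d\nu_{\phi_s}$ is $C^1$ with
\[
   \frac{d}{ds}\int\psi\,d\nu_{\phi_s}=Q_{\phi_s}(\psi,\,\phi_2-\phi_1),
\]
so by the fundamental theorem of calculus
\[
   \int\psi\,d\nu_{\phi_2}-\int\psi\,d\nu_{\phi_1}=\int_0^1 Q_{\phi_s}(\psi,\,\phi_2-\phi_1)\,ds .
\]
It therefore suffices to prove a bound $|Q_h(\psi,\,\phi_2-\phi_1)|\le C\,\|\phi_1-\phi_2\|_\infty$ valid uniformly for $h\in\mathcal H^{\alpha,\gamma}$ (this also settles the statement for the H\"older norm, since $\|\cdot\|_\infty\le\|\cdot\|_{C^\gamma}$).

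For the last bound, let $\mathcal L_h$ be the normalized Ruelle transfer operator associated to $\mu_h$, so that $\mathcal L_h\mathbf 1=\mathbf 1$, $\mathcal L_h^{*}\mu_h=\mu_h$ and $\int u\,(w\circ f)\,d\mu_h=\int(\mathcal L_h u)\,w\,d\mu_h$. Then the $n$-th term of the series (\ref{forma}) defining $Q_h(\psi,\phi_2-\phi_1)$ equals $\int(\phi_2-\phi_1)\bigl(\mathcal L_h^{\,n}\psi-\int\psi\,d\mu_h\bigr)d\mu_h$, whose modulus is at most $\|\phi_2-\phi_1\|_\infty\,\bigl\|\mathcal L_h^{\,n}\psi-\int\psi\,d\mu_h\bigr\|_{C^\gamma}$. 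The hard part is the uniformity: since $(Z,f)$ is a mixing subshift of finite type and, over $h\in\mathcal H^{\alpha,\gamma}$, the potentials have oscillation $\le M$ and H\"older seminorm $\le\alpha$, the Ruelle--Perron--Frobenius theorem (see \cite{B2}) holds with constants depending only on $\alpha$, $\gamma$ and the transition matrix: there exist $K\ge 1$ and $\theta\in(0,1)$ with $\bigl\|\mathcal L_h^{\,n}u-\int u\,d\mu_h\bigr\|_{C^\gamma}\le K\theta^{n}\|u\|_{C^\gamma}$ for all such $h$, all $u\in C^\gamma(Z)$ and all $n\ge0$. (This reduces to uniform upper and lower bounds for the leading eigenfunction of the unnormalized Ruelle operator together with a uniform bound for its H\"older seminorm, all of which come from the standard Gibbs estimates with constants depending only on the H\"older data and the combinatorics; alternatively it follows from the contraction, in the Hilbert projective metric, of a fixed Birkhoff cone whose image has aperture bounded in terms of those data.) Granting this, summing over $n$ gives $|Q_h(\psi,\phi_2-\phi_1)|\le\tfrac{K(M+\alpha)}{1-\theta}\|\phi_1-\phi_2\|_\infty$, and feeding this into the integral identity above yields the Proposition with $C=K(M+\alpha)/(1-\theta)$, a constant depending only on $\alpha$ and $\gamma$ (and on the fixed system $(Z,f)$). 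The one genuinely non-routine point is the uniform spectral gap just used; I would either cite a version of the Ruelle--Perron--Frobenius theorem stated with explicit constants, or include the short Birkhoff-cone argument for subshifts of finite type.
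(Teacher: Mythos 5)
Your argument is essentially the paper's: both express the difference of integrals through the derivative of the pressure function, reduce the problem to bounding the bilinear form $Q$ at potentials on the segment between $\phi_1$ and $\phi_2$ (you integrate along the segment via the fundamental theorem of calculus, the paper uses the mean value theorem at a single intermediate point, and convexity of $\mathcal{H}^{\alpha,\gamma}$ covers both), and then conclude from exponential decay of the terms of the series (\ref{forma}) with constants depending only on $\alpha$ and $\gamma$. The uniform spectral-gap estimate you flag as the one non-routine point is exactly the ingredient the paper disposes of by citing Chapter 5, Exercise 4 of \cite{R1}, so citing that suffices and no separate cone argument is needed.
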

\begin{proof}
Write $\phi_2=\phi_1+\delta u$ where
$u=(\phi_2-\phi_1)/\|\phi_1-\phi_2\|$ and $\delta=\|\phi_1-\phi_2\|$.
Then, by \cite{R1}, we have that
\begin{align*}
  \left|\int \psi\, d\nu_{\phi_1}-\int \psi\, d\nu_{\phi_2}\right|&=
  \left|\frac{\partial P(\phi_1+t\psi)}{\partial t}-
  \frac{\partial P(\phi_1+\delta u+t\psi)}{\partial t}\right|_{t=0}\\
  &=\delta \left|\frac{\partial^2{P(\phi_1+su+t\psi)}}{\partial t \partial s}\right|_{t=0,s=\xi}
  =\delta Q_{\phi_1}(\psi,\xi u),
\end{align*}
for some $\xi\in[0,\delta]$, where $Q$ is the bilinear form
defined by (\ref{forma}). Now it is well known (see Chapter 5, Exercise 4 of \cite{R1}) that
the general term of the series defining $Q$
decreases at an exponential rate $c\theta^n$ for some $c=c(\alpha,
\gamma)>0$ and $0<\theta=\theta(\alpha,\gamma)<1$. So take
$C=c/(1-\theta)$.
\end{proof}

The following result follows from Theorem 2.10 and Equation (5.8) of \cite{DG} applied
to the constant function 1.

\begin{proposition}
For any $\gamma\in (0,1]$ there exist $\eta(\gamma)\in (0,1]$ and
$C_\gamma>0$ such that if $\phi\in\mathcal{H}^{D,\gamma}$ then
$A_\phi\in\mathcal{H}^{C_\gamma D \exp(\|\phi\|),\eta(\gamma)}$.
\end{proposition}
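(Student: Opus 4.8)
The plan is to deduce the statement directly from the quantitative regularity results of \cite{DG} for the relativized transfer operator, since the function $A_\phi$ is by construction the ``fiber pressure'' object built from that operator (this is the very content of (\ref{gauge})). Concretely: given a H\"older potential $\phi$ on $X$, \cite{DG} associates to it a family of transfer operators acting on functions over the fibers $\pi^{-1}(y)$, and Theorem 2.10 there shows that the associated leading data depends H\"older-continuously on $y$, with explicit control of both the H\"older exponent and the H\"older constant in terms of the modulus of continuity of $\phi$ and of $\|\phi\|$. Equation (5.8) of \cite{DG} identifies $A_\phi$ with the value of this object, which, with the normalization used there, is what one obtains by feeding the operator the constant function $1$. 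So the proof reduces to unpacking this identification and substituting the hypothesis on $\phi$.

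First I would recall the construction of $A_\phi$ and match it to the notation of \cite{DG}, so that Theorem 2.10 applies verbatim. Then I would substitute $\phi\in\mathcal{H}^{D,\gamma}$, i.e. $|\phi(x_1)-\phi(x_2)|\le D\,d(x_1,x_2)^\gamma$, into the estimate coming from Theorem 2.10 applied to $1$. The exponent $\gamma$ gets replaced by a strictly smaller exponent $\eta(\gamma)\in(0,1]$, the loss being forced by the contraction rate of the fiberwise operator on the relevant H\"older cone; $\eta(\gamma)$ depends only on $\gamma$ (and on the combinatorics of the two subshifts), not on $\phi$. The resulting H\"older constant of $A_\phi$ factors as a product of a purely geometric constant $C_\gamma$ depending only on $\gamma$, the seminorm $D$ of $\phi$ (which enters linearly, coming from the oscillation of $\phi$ along cylinders), and $\exp(\|\phi\|)$ (coming from bounding the exponentials of the Birkhoff-type sums $S_n\phi$ that appear in expanding the operator). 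This is exactly $A_\phi\in\mathcal{H}^{C_\gamma D\exp(\|\phi\|),\eta(\gamma)}$.

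The only point that requires care is the bookkeeping of constants: one must check that the dependence on $\phi$ separates precisely as $D\exp(\|\phi\|)$, linear in the H\"older seminorm and exponential in the sup-norm, rather than in some more entangled way. This is precisely what the quantitative form of Theorem 2.10 of \cite{DG} delivers, its proof isolating exactly this split, so once the notation is matched the conclusion is immediate. For the use we make of it this is the right shape: in (\ref{novopv3}) one has $\phi=-t\varphi$ with $t\in[0,1]$, so $\phi$ ranges inside a single ball $\mathcal{H}^{D,\gamma}$ with $D=\|\varphi\|_\gamma$ fixed and $\|\phi\|$ uniformly bounded, which together with the previous Proposition is what yields continuity of $t\mapsto\beta(t)$ and hence of $t\mapsto h(t)$.
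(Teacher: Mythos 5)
Your proposal matches the paper's own treatment: the paper gives no independent argument but simply derives the proposition from Theorem 2.10 and Equation (5.8) of \cite{DG} applied to the constant function $1$, which is exactly the reduction you carry out. Your additional bookkeeping of how the constant splits as $C_\gamma D\exp(\|\phi\|)$ with exponent $\eta(\gamma)$ is a reasonable unpacking of that citation and is consistent with the statement as used later in the proof of Theorem A.
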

Let
\[
   F(t,\beta)=\int \log A_{-t\varphi}\,d\nu_{(t,\beta)},
\]
where $\nu_{(t,\beta)}$ is the Gibbs sate for the potential
$(t-D)\psi+\beta \log A_{-t\varphi}$. By Propositions 1 and 2
together with the continuity of $[0,1]\ni t\mapsto A_{-t\varphi}$,
we conclude that $(t,\beta)\mapsto F(t,\beta)$ is continuous.
Also, by \cite{R1},
\begin{equation}\label{imp1}
    \frac{\partial F}{\partial \beta}(t,\beta)=Q_{(t-D)\psi+\beta \log A_{-t\varphi}}
    (\log A_{-t\varphi}, \log A_{-t\varphi})>0,
\end{equation}
and, for each $t$, $\beta\mapsto \frac{\partial F}{\partial
\beta}(t,\beta)$ is continuous. Let $t_0\in
(\underline{t},\overline{t})$ and $\beta_0=\beta(t_0)$. Then
$F(t_0,\beta_0)=0$ and given $\varepsilon>0$ sufficiently small we
get by (\ref{imp1}) that
\begin{equation}\label{imp2}
   \text{for } t=t_0,\quad F(t,\beta_0-\varepsilon)<0 \quad\text{and}\quad
   F(t,\beta_0+\varepsilon)>0.
\end{equation}
By continuity, there is $\delta>0$ such that (\ref{imp2}) holds
for all $t\in (t_0-\delta, t_0+\delta)$. So, by the intermediate
value theorem, there is a unique $\tilde{\beta}(t)\in
(\beta_0-\varepsilon, \beta_0+\epsilon)$ such that
$F(t,\tilde{\beta}(t))=0$ for all $t\in (t_0-\delta, t_0+\delta)$.
By uniqueness we have $\beta(t)=\tilde{\beta}(t)$, which implies
that $\beta(t)$ is continuous at $t_0$.

Finally, the continuity of $h(t)$ follows from the continuity of
$t\mapsto A_{-t\varphi}$ and $\beta(t)$, together with
$|P(\varphi_1)-P(\varphi_2)|\le \|\varphi_1-\varphi_2\|$ (see \cite{R1}).
So if the supremum of $(\underline{t},\overline{t})\ni t\mapsto
h(t)$ is attained at $t^*\in (\underline{t},\overline{t})$ then
\[
     D=D(\mu_{\nu_{\beta(t^*)}}).
\]

To finish, we must consider the cases for which the supremum in (\ref{novopv2}) is attained at
$\underline{t}$ or $\overline{t}$. If the
supremum is attained at $\underline{t}$ then
\begin{equation*}
 D\le \sup_{\nu\in\mathcal{M}(S)} \frac{h_\nu(S)}{\int \psi\,d\nu} +\underline{t}
 \le \frac{h_{\nu_0}(S)}{\int \psi\,d\nu_0}+t(\nu_0)\le D,
\end{equation*}
where $\nu_0$ is the Gibbs measure for the potential $-D\psi$,
so equality holds. Now assume that the supremum is attained at
$\overline{t}$. Then by (\ref{novopv1}) and standard upper-semicontinuous
arguments we get a measure $\nu\in\mathcal{M}(S)$ such that
\begin{equation}\label{extremo}
   h_\nu(S)+(\overline{t}-D)\int \psi\,d\nu=0\quad\text{and}\quad
   \int \log A_{-\overline{t}\varphi}\,d\nu=0.
\end{equation}
Let $\nu=\int \nu_\alpha\,d\alpha$ be the ergodic decomposition of
$\nu$. Since $\int \log A_{-\overline{t}\varphi}\,d\nu_\alpha\le0$
(because $t(\nu_\alpha)\le\overline{t}$) and $\int\int \log
A_{-\overline{t}\varphi}\,d\nu_\alpha\,d\alpha=0$, we get that
$\int \log A_{-\overline{t}\varphi}\,d\nu_\alpha=0$ i.e
$t(\nu_\alpha)=\overline{t}$ for a.e $\alpha$. Then using the
formula for the ergodic decomposition for the entropy in
(\ref{extremo}) we get
\[
  \int \left(h_{\nu_\alpha}(S)+ (t(\nu_\alpha)-D)\int \psi\,d\nu_\alpha \right) \,d\alpha=0.
\]
Since $h_{\nu_\alpha}(S)+ (t(\nu_\alpha)-D)\int \psi\,d\nu_\alpha\le0$ for every $\alpha$,
we get that
\[
   h_{\nu_\alpha}(S)+ (t(\nu_\alpha)-D)\int \psi\,d\nu_\alpha=0
\]
for a.e. $\alpha$, and thus $D=D(\mu_{\nu_\alpha})$.
\begin{flushright}$\square$\end{flushright}

\begin{rem}
By the proof of Theorem A, if the supremum (\ref{novopv2}) is not attained at the extremal points
$\underline{t}$ and $\overline{t}$, then the maximizing measures of (\ref{pvprinc}) are given by
$\mu_{\nu_{\beta(t)}}$ where $t$ is a zero (i.e. a maximizing point) of the function $h$ defined
by (\ref{h}). So if $h$ is $\mathrm{C}^2$ and $h''<0$ then (\ref{pvprinc}) has a \emph{unique}
maximizing measure, which is ergodic.

Note that, by (\ref{rpv}), (\ref{gauge}) and the classical variational principle,
\begin{equation*}
 P(\log A_\varphi + \psi, Y)=P(\varphi+\psi\circ\pi, X).
\end{equation*}
Is there some kind of relation for $\beta \log A_\varphi + \psi, \beta\in\mathbb{R}$?
\end{rem}

\begin{rem}
In fact, $(T,X)$ need not be a mixing subshift of finite type; what we really need is
$\mathcal{F}=(X,T,Y,S,\pi)$ to be a \emph{fibred system} which is \emph{fibre expanding} and
\emph{topologically exact along fibres} as defined in \cite{DG} or \cite{DGH}.
\end{rem}

\section{Measure of full dimension}

Let $f\colon \mathbb{T}^2\to\mathbb{T}^2,\,f(x,y)=(a(x,y),b(y))$ and $\Lambda$ such that
$f(\Lambda)=\Lambda$ be as in Theorem B. Let $\pi\colon\mathbb{T}^2\to\mathbb{T}^1$ be the
projection given by $\pi(x,y)=y$. Then $\pi\circ f= b\circ\pi$, and we are in the conditions of
Theorem A with $T\equiv f|\Lambda$, $S\equiv b|\pi(\Lambda)$, $\varphi\equiv \log\partial_x a$ and
$\psi\equiv\log b'$.
(We have used ``$\equiv$'' instead of ``$=$'' because to be more precise one should
use conjugacies to identify these maps; these conjugacies are continuous, surjective and
bounded-to-one, and only fail to be a homeomorphism when
some elements of the Markov partition intersect, which causes no problem when dealing with
Hausdorff dimension.)

Then, using the notation of the previous section, the following is proved in \cite{L1}.

\begin{theorem}\label{teo2}
\[
   \hd\Lambda=\sup_{\nu\in\mathcal{M}_{e}(b|\pi(\Lambda))} \hd \mu_\nu
\]
and if $\nu\in\mathcal{M}_{e}(b|\pi(\Lambda))$ then
\[
  \hd \mu_\nu=\frac{h_\nu(b)}{\int \log b'\,d\nu}+t(\nu).
\]
\end{theorem}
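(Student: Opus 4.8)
\emph{Plan.} The plan is to pass to the symbolic model, prove the formula $\hd\mu_\nu=h_\nu(S)/\int\psi\,d\nu+t(\nu)$ for each ergodic $\nu$ as the core of the lower bound $\hd\Lambda\ge\sup_\nu\hd\mu_\nu$, and then obtain the matching upper bound by a covering argument combined with Theorem A. First I would replace $(f|\Lambda,\Lambda)$ by its symbolic model: through the conjugacy $h$, $\Lambda$ inherits the Markov partition coming from the grid of $f_0$, which makes $f|\Lambda$ (semi)conjugate to a mixing subshift of finite type $(X,T)$, $b|\pi(\Lambda)$ to a mixing subshift of finite type factor $(Y,S)$ with factor map still denoted $\pi$, and transports $\log\partial_x a$, $\log b'$ to H\"older potentials $\varphi$, $\psi$. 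Since these conjugacies are continuous, surjective and bounded-to-one they preserve Hausdorff dimension, so one may argue entirely symbolically. Because $l>m$, for $f$ sufficiently $\mathrm C^2$-close to $f_0$ one has $\inf\varphi>\sup\psi>0$. For $x\in X$ and $n\in\mathbb N$ let $k_n(x)$ be the largest integer $k$ with $S_k\varphi(x)\le S_n(\psi\circ\pi)(x)$; then $k_n(x)<n$ and $S_{k_n(x)}\varphi(x)=S_n(\psi\circ\pi)(x)+O(1)$. The \emph{approximate square} $Q_n(x)=[x_1,\dots,x_{k_n(x)}]\cap\pi^{-1}[(\pi x)_1,\dots,(\pi x)_n]$ then has $x$-width and $y$-height both $\asymp\rho_n(x):=e^{-S_n(\psi\circ\pi)(x)}$ by the bounded-distortion estimate (\ref{pres3}), hence is comparable to a genuine ball of radius $\rho_n(x)$; as the coding is bounded-to-one, the mass distribution principle can be applied along the covers $\{Q_n(x)\}_x$ of $\Lambda$.

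\emph{Lower bound and the dimension formula.} Fix an ergodic $\nu\in\mathcal M_e(S)$ and let $\mu_\nu$ be the unique (and ergodic) equilibrium state for $-t(\nu)\varphi$ relative to $\nu$, with $t(\nu)$ defined by (\ref{pres0}). At $\mu_\nu$-a.e.\ $x$ I would factor $\mu_\nu(Q_n(x))$, up to bounded constants, as the $\nu$-mass of $[(\pi x)_1,\dots,(\pi x)_n]$ times the fibre-conditional mass of $[x_1,\dots,x_{k_n(x)}]$. Shannon--McMillan--Breiman for $\nu$ gives $-\tfrac1n\log\nu([(\pi x)_1,\dots,(\pi x)_n])\to h_\nu(S)$; the relativized Gibbs property of $\mu_\nu$ from \cite{DG}, \cite{DGH}, together with $\int\log A_{-t(\nu)\varphi}\,d\nu=0$ (which is (\ref{pres0}) read through (\ref{gauge})) and Birkhoff along the $\nu$-generic fibre over $\pi x$, gives that the conditional mass equals $\exp(-t(\nu)S_{k_n(x)}\varphi(x)+o(n))$; and Birkhoff for $\varphi$, $\psi$ gives $S_{k_n(x)}\varphi(x)=n\int\psi\,d\nu+o(n)$. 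Hence $-\log\mu_\nu(Q_n(x))=n(h_\nu(S)+t(\nu)\int\psi\,d\nu)+o(n)$ while $-\log\rho_n(x)=n\int\psi\,d\nu+o(n)$, so the local dimension of $\mu_\nu$ equals $h_\nu(S)/\int\psi\,d\nu+t(\nu)$ a.e.; by the mass distribution principle this is $\hd\mu_\nu$, which is the asserted formula, and taking the supremum over $\nu\in\mathcal M_e(S)$ yields $\hd\Lambda\ge\sup_\nu\hd\mu_\nu$.

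\emph{Upper bound and conclusion.} For $s\ge0$, cover $\Lambda$ by the generation-$n$ approximate squares and set $\Sigma_n(s)=\sum_x(\operatorname{diam}Q_n(x))^s$. By bounded distortion $\Sigma_n(s)$ is comparable to a sum of $e^{-sS_n(\psi\circ\pi)}$ over compatible pairs consisting of a length-$k_n$ cylinder in $X$ and a length-$n$ cylinder in $Y$; grouping the terms by the (essentially constant) values of the Birkhoff averages of $\varphi$ and $\psi$ and using the Gibbs property of the relativized equilibrium states to count the $X$-cylinders over a given $Y$-fibre, one obtains $\limsup_n\tfrac1n\log\Sigma_n(s)\le\sup_{\nu\in\mathcal M(S)}\{h_\nu(S)+(t(\nu)-s)\int\psi\,d\nu\}$. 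With $D:=\sup_{\nu\in\mathcal M(S)}(h_\nu(S)/\int\psi\,d\nu+t(\nu))$ this bound is negative whenever $s>D$, so $\Sigma_n(s)\to0$ and $\hd\Lambda\le D$. Finally, by (\ref{pvorig1})--(\ref{pvorig2}) this $D$ is precisely the supremum of (\ref{pvprinc}) over $\mathcal M(T)$, so Theorem A furnishes $\mu^*\in\mathcal M_e(T)$ attaining it; setting $\nu^*=\mu^*\circ\pi^{-1}\in\mathcal M_e(S)$ and using the equality case of (\ref{restrain}) gives $D=h_{\nu^*}(S)/\int\psi\,d\nu^*+t(\nu^*)=\hd\mu_{\nu^*}$, so $\hd\Lambda\le D=\hd\mu_{\nu^*}\le\sup_\nu\hd\mu_\nu$. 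Combined with the lower bound this proves both assertions.

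\emph{Main obstacle.} The delicate part is the upper bound: the approximate squares have a generation $k_n(x)$ in the expanding $x$-direction that varies with the point, so the exponential rate of $\Sigma_n(s)$ is governed by a constrained (relative) pressure rather than a plain one, and making the estimate for $\limsup_n\tfrac1n\log\Sigma_n(s)$ rigorous is exactly the situation calling for the Pesin--Pitskel pressure on the noncompact level sets $K_\alpha$ and the variational principle of Theorem C --- decomposing $\Lambda$ according to the values of the Birkhoff averages of $\varphi$ and $\psi$ and bounding the dimension of each piece separately. Verifying uniform bounded distortion and bounded overlap of the approximate squares (where $\inf\varphi>\sup\psi$ is used) is the remaining technical point, but it is routine.
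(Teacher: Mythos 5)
You should note first that the paper contains no proof of Theorem \ref{teo2}: it is quoted from \cite{L1} (``the following is proved in \cite{L1}''), so there is no internal argument to compare yours with, and a complete proof amounts to reproving the main theorem of that paper. For the second assertion your approximate-square/Shannon--McMillan--Breiman computation of the local dimension of $\mu_\nu$ is a plausible but heavy route; the paper's closing Remark points to the cheap one: by the Ledrappier--Young type formula of \cite{LY}, for ergodic $\mu$ with $\nu=\mu\circ\pi^{-1}$ one has $\hd\mu=h_\nu(b)/\int\log b'\,d\nu+(h_\mu(f)-h_\nu(b))/\int\log\partial_x a\,d\mu$, and applying this to $\mu=\mu_\nu$ together with the equality case of (\ref{restrain}) gives at once $\hd\mu_\nu=h_\nu(b)/\int\log b'\,d\nu+t(\nu)$; the inequality $\hd\Lambda\ge\sup_\nu\hd\mu_\nu$ is then immediate. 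Up to the technical points you yourself flag (fibre conditionals of the relativized Gibbs state, comparability of approximate squares with balls under $\inf\varphi>\sup\psi$), this half of your plan is sound in outline.

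The genuine gap is the upper bound, which is the actual content of the theorem. The estimate $\limsup_n\frac1n\log\Sigma_n(s)\le\sup_{\nu\in\mathcal M(S)}\{h_\nu(S)+(t(\nu)-s)\int\psi\,d\nu\}$ is asserted in a single sentence, but it is exactly where the work of \cite{L1} lies: one must convert a purely combinatorial count of the length-$k_n(x)$ $X$-cylinders lying over a fixed length-$n$ $Y$-cylinder, with prescribed Birkhoff sums of $\varphi$, into the fibred pressure quantity that defines $t(\nu)$ through (\ref{pres0}) and (\ref{gauge}); this requires manufacturing invariant measures out of the covers (a Misiurewicz/Katok-type or large-deviations argument), not merely citing the Gibbs property of the measures $\mu_\nu$, which are adapted to one fixed $\nu$ and say nothing about cylinders atypical for every $\mu_\nu$. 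The repair you sketch under ``Main obstacle'' does not fill this: decomposing $\Lambda$ into the level sets $K_\alpha$ misses the irregular set, which need not be dimension-negligible, and Theorem C computes the Pesin--Pitskel pressure $P(\varphi,K_\alpha)$ of a level set of a single Birkhoff average with respect to the absolute pressure --- it does not bound the Hausdorff dimension of the corresponding piece of $\Lambda$ in this nonconformal situation, and it never sees the fibred constraint $\int\log A_{-t\varphi}\,d\nu=0$ that produces $t(\nu)$ (in the paper Theorem C is applied on the base $Y$, inside the proof of Theorem A, for a quite different purpose). Finally, your appeal to Theorem A to pass from invariant to ergodic measures is not circular (Theorem A is proved independently of Theorem \ref{teo2}), but it inverts the paper's architecture: if your covering estimate were available, Theorem B would follow immediately, which is a sign that the one-line estimate is carrying the entire weight of the theorem.
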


\begin{rem}
Even though the map
\[
  \mathcal{M}_{e}(b|\pi(\Lambda))\ni\nu\mapsto \hd\mu_\nu
\]
is upper-semicontinuous (see Remark 6 of \cite{L1}), we cannot conclude
there is an invariant measure of full dimension because the subset
$\mathcal{M}_{e}(b|\pi(\Lambda))\subset\mathcal{M}(b|\pi(\Lambda))$
is not closed.
\end{rem}
\text{}

\emph{Proof of Theorem B}.
By the proof of Theorem A,
\[
  \sup_{\nu\in\mathcal{M}_e(b|\pi(\Lambda))} \left\{ \frac{h_\nu(b)}{\int \log b'\,d\nu}+t(\nu)
  \right\}
\]
is attained at some $\nu_0\in\mathcal{M}_e(b|\pi(\Lambda))$. Then, by Theorem \ref{teo2},
\[
   \hd\Lambda=\hd\mu_{\nu_0}.
\]
\begin{flushright}$\square$\end{flushright}

\begin{rem}
It follows from \cite{LY} that, given $\mu\in\mathcal{M}_e(f|\Lambda)$ with $\nu=\mu\circ\pi^{-1}$,
\[
  \hd\mu=\frac{h_\nu(b)}{\int \log b'\,d\nu}+\frac{h_\mu(f)-h_\nu(b)}{\int\log\partial_x a\,d\mu}.
\]
Now by (\ref{restrain}),
\[
  \frac{h_\mu(f)-h_\nu(b)}{\int\log\partial_x a\,d\mu}\le t(\nu),
\]
so
\[
  \hd\mu\le\frac{h_\nu(b)}{\int \log b'\,d\nu}+t(\nu)=\hd\mu_\nu
\]
with equality iff $\mu=\mu_\nu$.
This shows that ergodic invariant measures of full dimension are obtain by our method.
\end{rem}

\textbf{Acknowledgments:} I would like to thank a referee of a previous version of this manuscript
for careful reading and helpful suggestions.
\\\\

\text{}\\
\textsc{Departamento de Matemática, Instituto Superior Técnico,
1049-001 Lisboa, Portugal}

\emph{E-mail address}: nluzia@math.ist.utl.pt

\end{document}